\newtheorem{theorem}{Theorem}[section]
\theoremstyle{definition}
\theoremstyle{remark}
\numberwithin{equation}{section}
\begin{document}

\newcommand{\spacing}[1]{\renewcommand{\baselinestretch}{#1}\large\normalsize}
\spacing{1.14}

\title{On the curvature of invariant Kropina metrics}

\author {H. R. Salimi Moghaddam}

\address{Department of Mathematics, Faculty of  Sciences, University of Isfahan, Isfahan,81746-73441-Iran.} \email{salimi.moghaddam@gmail.com and hr.salimi@sci.ui.ac.ir}

\keywords{invariant metric, flag curvature,
$(\alpha,\beta)-$metric, Kropina metric, homogeneous space, Lie group\\
AMS 2010 Mathematics Subject Classification: 22E60, 53C60, 53C30.}


\begin{abstract}
In the present article we compute the flag curvature of a special
type of invariant Kropina metrics on homogeneous spaces.
\end{abstract}

\maketitle


\section{\textbf{Introduction}}\label{intro}
Let $M$ be a smooth $n-$dimensional manifold and $TM$ be its
tangent bundle. A Finsler metric on $M$ is a non-negative function
$F:TM\longrightarrow \Bbb{R}$ which has the following properties:
\begin{enumerate}
    \item $F$ is smooth on the slit tangent bundle
    $TM^0:=TM\setminus\{0\}$,
    \item $F(x,\lambda y)=\lambda F(x,y)$ for any $x\in M$, $y\in T_xM$ and $\lambda
    >0$,
    \item the $n\times n$ Hessian matrix $[g_{ij}(x,y)]=[\frac{1}{2}\frac{\partial^2 F^2}{\partial y^i\partial
    y^j}]$ is positive definite at every point $(x,y)\in TM^0$.
\end{enumerate}
For a smooth manifold $M$ suppose that $g$ and $b$ are a
Riemannian metric and  a 1-form respectively as follows:
\begin{eqnarray}
  g&=&g_{ij}dx^i\otimes dx^j \\
  b&=&b_idx^i.
\end{eqnarray}
An important family of Finsler metrics is the family of
$(\alpha,\beta)-$metrics which is introduced by M. Matsumoto (see
\cite{Ma}) and has been studied by many authors. An interesting
and important example of such metrics is the Kropina metrics with
the following form:
\begin{eqnarray}
  F(x,y)=\frac{\alpha(x,y)^2}{\beta(x,y)},
\end{eqnarray}
where $\alpha(x,y)=\sqrt{g_{ij}(x)y^iy^j}$ and $\beta(x,y)=b_i(x)y^i$.\\

In a natural way, the Riemannian metric $g$ induces an inner
product on any cotangent space $T^\ast_xM$ such that
$<dx^i(x),dx^j(x)>=g^{ij}(x)$. The induced inner product on
$T^\ast_xM$ induces a linear isomorphism between $T^\ast_xM$ and
$T_xM$ (for more details see \cite{DeHo}.). Then the 1-form $b$
corresponds to a vector field $\tilde{X}$ on $M$ such that
\begin{eqnarray}
  g(y,\tilde{X}(x))=\beta(x,y).
\end{eqnarray}

Therefore we can write the Kropina metric
$F=\frac{\alpha^2}{\beta}$ as follows:
\begin{eqnarray}
  F(x,y)=\frac{\alpha(x,y)^2}{g(\tilde{X}(x),y)}.
\end{eqnarray}
Flag curvature, which is a generalization of the concept of
sectional curvature in Riemannian geometry, is one of the
fundamental quantities which associates with a Finsler space. Flag
curvature is computed by the following formula:
\begin{eqnarray}\label{flag}
  K(P,Y)=\frac{g_Y(R(U,Y)Y,U)}{g_Y(Y,Y).g_Y(U,U)-g_Y^2(Y,U)},
\end{eqnarray}
where $g_Y(U,V)=\frac{1}{2}\frac{\partial^2}{\partial s\partial
t}(F^2(Y+sU+tV))|_{s=t=0}$, $P=span\{U,Y\}$,
$R(U,Y)Y=\nabla_U\nabla_YY-\nabla_Y\nabla_UY-\nabla_{[U,Y]}Y$ and
$\nabla$ is the Chern connection induced by $F$ (see \cite{BaChSh}
and \cite{Sh}.).\\
In general, the computation of the flag curvature of Finsler
metrics is very difficult, therefore it is important to find an
explicit and applicable formula for the flag curvature.  In
\cite{EsSa}, we have studied the flag curvature of invariant
Randers metrics on naturally reductive homogeneous spaces and in
\cite{Sa1} we generalized this study on a general homogeneous
space. Also in \cite{Sa2} we considered $(\alpha,\beta)-$metrics
of the form $\frac{(\alpha+\beta)^2}{\alpha}$ and gave the flag
curvature of these metrics. In this paper we study the flag
curvature of invariant Kropina metrics on homogeneous spaces.

\section{\textbf{Flag curvature of invariant Kropina metrics on homogeneous spaces}}
Let $G$ be a compact Lie group, $H$ a closed subgroup, and $g_0$ a
bi-invariant Riemannian metric on $G$. Assume that $\frak{g}$ and
$\frak{h}$ are the Lie algebras of $G$ and $H$ respectively. The
tangent space of the homogeneous space $G/H$ is given by the
orthogonal complement $\frak{m}$ of $\frak{h}$ in $\frak{g}$ with
respect to $g_0$. Each invariant metric $g$ on $G/H$ is determined
by its restriction to $\frak{m}$. The arising $Ad_H$-invariant
inner product from $g$ on $\frak{m}$ can extend to an
$Ad_H$-invariant inner product on $\frak{g}$ by taking $g_0$ for
the components in $\frak{h}$. In this way the invariant metric $g$
on $G/H$ determines a unique left invariant metric on $G$ that we
also denote by $g$. The values of $g_0$ and $g$ at the identity
are inner products on $\frak{g}$. We denote them by $<.,.>_0$
and $<.,.>$. The inner product $<.,.>$ determines a positive
definite endomorphism $\phi$ of $\frak{g}$ such that $<X,Y>=<\phi
X,Y>_0$ for all $X, Y\in\frak{g}$.\\
T. P\"uttmann has shown that the curvature tensor of the invariant
metric $<.,.>$ on the compact homogeneous space $G/H$ is given by
\begin{eqnarray}\label{puttmans formula}
  <R(X,Y)Z,W> &=& -\{\frac{1}{2}(<B_-(X,Y),[Z,W]>_0+<[X,Y],B_-(Z,W)>_0) \nonumber \\
    &+& \frac{1}{4}(<[X,W],[Y,Z]_{\frak{m}}>-<[X,Z],[Y,W]_{\frak{m}}> \\
    &-& 2<[X,Y],[Z,W]_{\frak{m}}>)+(<B_+(X,W),\phi^{-1}B_+(Y,Z)>_0 \nonumber\\
    &-&<B_+(X,Z),\phi^{-1}B_+(Y,W)>_0)\}\nonumber,
\end{eqnarray}
where $B_+$ and $B_-$ are defined by
\begin{eqnarray*}
  B_+(X,Y) &=& \frac{1}{2}([X,\phi Y]+[Y,\phi X]), \\
  B_-(X,Y) &=& \frac{1}{2}([\phi X,Y]+[X,\phi Y]),
\end{eqnarray*}
and $[.,.]_{\frak{m}}$ is the projection of $[.,.]$ to
$\frak{m}$.(see \cite{Pu}.).

\textbf{Notice.} We added a minus to the P\"uttmann's formula
because our definition of the curvature tensor $R$ is different
from the P\"uttmann's definition in a minus sign.

\begin{theorem}\label{flagcurvature}
Let $G, H, \frak{g}, \frak{h}, g, g_0$ and $\phi$ be as above.
Assume that $\tilde{X}$ is an invariant vector field on $G/H$ and
$X:=\tilde{X}_H$. Suppose that $F=\frac{\alpha^2}{\beta}$ is the
Kropina metric arising from $g$ and $\tilde{X}$ such that its
Chern connection coincides to the Levi-Civita connection of $g$.
Suppose that $(P,Y)$ is a flag in $T_H(G/H)$ such that $\{Y,U\}$
is an orthonormal basis of $P$ with respect to $<.,.>$. Then the
flag curvature of the flag $(P,Y)$ in $T_H(G/H)$ is given by
\begin{equation}\label{main-flag-cur-formula}
    K(P,Y)=\frac{3<U,X><R(U,Y)Y,X>+2<Y,X><R(U,Y)Y,U>}{2(\frac{<U,X>}{<Y,X>})^2+2},
\end{equation}
where
\begin{eqnarray}
   <R(U,Y)Y,X>&=&-\frac{1}{4}(<[\phi U,Y]+[U,\phi Y],[Y,X]>_0+<[U,Y],[\phi Y,X]+[Y,\phi X]>_0)\nonumber\\
    &&-\frac{3}{4}<[Y,U],[Y,X]_\frak{m}>-\frac{1}{2}<[U,\phi X]+[X,\phi U],\phi^{-1}([Y,\phi Y])>_0\\
    &&+\frac{1}{4}<[U,\phi Y]+[Y,\phi U],\phi^{-1}([Y,\phi X]+[X,\phi
    Y])>_0,\nonumber
   \end{eqnarray}
and
\begin{eqnarray}
  <R(U,Y)Y,U>&=&-\frac{1}{2}<[\phi U,Y]+[U,\phi Y],[Y,U]>_0\nonumber \\
  && \ \ \ -\frac{3}{4}<[Y,U],[Y,U]_{\frak{m}}>-<[U,\phi U],\phi^{-1}([Y,\phi Y])>_0 \\
  && \ \ \ +\frac{1}{4}<[U,\phi Y]+[Y,\phi U],\phi^{-1}([Y,\phi U]+[U, \phi Y])>_0.\nonumber
\end{eqnarray}
\end{theorem}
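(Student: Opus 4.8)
The plan is to evaluate the flag curvature formula \eqref{flag} directly, taking advantage of the standing hypothesis that the Chern connection of $F$ coincides with the Levi-Civita connection of $g$. Under that hypothesis the operator $R(U,Y)Y$ in \eqref{flag} is nothing but the Riemannian curvature of the invariant metric $\langle\cdot,\cdot\rangle$, so its inner products against any vector can be read off from P\"uttmann's formula \eqref{puttmans formula}. The argument therefore separates into two nearly independent pieces: (i) computing the fundamental tensor $g_Y$ of the Kropina metric and substituting it into \eqref{flag}, and (ii) specializing \eqref{puttmans formula} to produce the two curvature components $\langle R(U,Y)Y,X\rangle$ and $\langle R(U,Y)Y,U\rangle$ recorded in the statement.

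For piece (i) I would work at the base point $H$, where $\alpha(w)^2=\langle w,w\rangle$ and $\beta(w)=\langle X,w\rangle$, so that $F^2(w)=\langle w,w\rangle^2/\langle X,w\rangle^2$. Setting $w=Y+sU+tV$, writing $P=\langle w,w\rangle$ and $Q=\langle X,w\rangle$, and carrying out the double derivative $\tfrac12\partial_s\partial_t$ at $s=t=0$ (using $\partial_s\partial_t P=2\langle U,V\rangle$ and $\partial_s\partial_t Q=0$) yields a five-term expression for $g_Y(U,V)$ built from $\langle Y,\cdot\rangle$, $\langle X,\cdot\rangle$ and $\langle\cdot,\cdot\rangle$. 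I would then impose the orthonormality $\langle Y,Y\rangle=\langle U,U\rangle=1$, $\langle Y,U\rangle=0$ and extract the four values needed in \eqref{flag}, namely $g_Y(Y,Y)$, $g_Y(U,U)$, $g_Y(Y,U)$ and $g_Y(R(U,Y)Y,U)$. Two simplifications are the crux of the computation: homogeneity gives $g_Y(Y,Y)=F^2(Y)=\langle X,Y\rangle^{-2}$, and the skew-symmetry of the curvature tensor gives $\langle R(U,Y)Y,Y\rangle=0$, which annihilates every term of $g_Y(R(U,Y)Y,U)$ carrying a factor $\langle Y,R(U,Y)Y\rangle$.

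Assembling the quotient \eqref{flag} is then routine. After the orthonormality substitution the denominator $g_Y(Y,Y)\,g_Y(U,U)-g_Y(Y,U)^2$ collapses to a multiple of $\langle X,Y\rangle^{-6}\bigl(\langle X,Y\rangle^2+\langle X,U\rangle^2\bigr)$, which is the origin of the factor $2(\langle U,X\rangle/\langle Y,X\rangle)^2+2$ in \eqref{main-flag-cur-formula}, while the numerator reduces to a linear combination of $\langle R(U,Y)Y,U\rangle$ and $\langle X,U\rangle\langle R(U,Y)Y,X\rangle$. Cancelling the common powers of $\langle X,Y\rangle$ between numerator and denominator leaves the closed form \eqref{main-flag-cur-formula}.

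Piece (ii) is obtained by inserting $(U,Y,Y,X)$ and $(U,Y,Y,U)$, respectively, into the slots of \eqref{puttmans formula}, expanding the definitions of $B_{+}$ and $B_{-}$, and dropping the terms that contain $[Y,Y]=0$. I expect the principal difficulty of the whole proof to be bookkeeping rather than any genuine conceptual obstacle: the delicate points are getting the five-term fundamental tensor exactly right (this is where the Kropina denominator $\beta$ makes $g_Y$ differ from the Riemannian case and brings in the $\langle X,\cdot\rangle$ terms), correctly carrying the overall sign dictated by the \textbf{Notice} following \eqref{puttmans formula}, and using $[U,Y]=-[Y,U]$ so that the coefficient $\tfrac14(-1-2)=-\tfrac34$ of the $[\cdot,\cdot]_{\frak m}$ term appears with the sign shown in the statement. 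Once the fundamental tensor is in hand, no further idea is required.
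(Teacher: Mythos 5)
Your proposal is correct and takes essentially the same approach as the paper's own proof: identify $R$ with the Riemannian curvature tensor of $g$ via the connection hypothesis, compute the Kropina fundamental tensor $g_Y$ directly from its definition at the base point, exploit orthonormality together with $\langle R(U,Y)Y,Y\rangle=0$, substitute into \eqref{flag}, and specialize P\"uttmann's formula \eqref{puttmans formula} (with the sign from the Notice) to get the two curvature components. One caveat: carried out exactly, your computation --- like the paper's own equations \eqref{eq1} and \eqref{eq2} --- produces $2\langle Y,X\rangle^{2}\langle R(U,Y)Y,U\rangle$ in the numerator rather than $2\langle Y,X\rangle\langle R(U,Y)Y,U\rangle$, so the first power of $\langle Y,X\rangle$ appearing in \eqref{main-flag-cur-formula} is a typo in the theorem statement, not something your (or the paper's) argument actually yields.
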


\begin{proof}
The Chern connection of $F$ coincides on the Levi-Civita
connection of $g$. Therefore the Finsler metric $F$ and the
Riemannian metric $g$ have the same curvature tensor. We
denote it by $R$.\\
By using the definition of $g_Y(U,V)$ and some computations for
$F$ we have:
\begin{eqnarray}\label{g_Y}
  g_Y(U,V)&=& \frac{1}{g^4(Y,X)}\{(2g(Y,U)g(Y,X)-g(U,X)g(Y,Y))(2g(Y,V)g(Y,X)-g(V,X)g(Y,Y))\nonumber \\
  &&+g(Y,Y)(g(Y,X)(2g(U,V)g(Y,X)+2g(Y,V)g(U,X)-2g(V,X)g(Y,U))\\
  &&-2g(U,X)(2g(Y,V)g(Y,X)-g(V,X)g(Y,Y)))\}\nonumber
\end{eqnarray}
By attention to this consideration that $\{Y,U\}$ is an
orthonormal basis for $P$ with respect to $g$ and (\ref{g_Y}) we
have
\begin{eqnarray}\label{eq1}
  g_Y(R(U,Y)Y,U)&=&\frac{1}{<Y,X>^4}\{\nonumber\\
  &&<U,X>(3<R(U,Y)Y,X>-2<Y,R(U,Y)Y><Y,X>)\\
  &&+2<Y,X>(<R(U,Y)Y,U><Y,X>-<U,X><Y,R(U,Y)Y>)\},\nonumber
\end{eqnarray}
and
\begin{eqnarray}\label{eq2}
  g_Y(Y,Y).g_Y(U,U)-g^2_Y(U,Y)&=&
  \frac{2<U,X>^2}{<Y,X>^6}+\frac{2}{<Y,X>^4}.
\end{eqnarray}
Now by using P\"uttmann's formula [6, eq. (\ref{puttmans formula})] we have:
\begin{eqnarray}\label{eq3}
<X,R(U,Y)Y>&=&-\frac{1}{4}(<[\phi U,Y]+[U,\phi Y],[Y,X]>_0+<[U,Y],[\phi Y,X]+[Y,\phi X]>_0)\nonumber\\
    &&-\frac{3}{4}<[Y,U],[Y,X]_\frak{m}>-\frac{1}{2}<[U,\phi X]+[X,\phi U],\phi^{-1}([Y,\phi Y])>_0\\
    &&+\frac{1}{4}<[U,\phi Y]+[Y,\phi U],\phi^{-1}([Y,\phi X]+[X,\phi
    Y])>_0,\nonumber
\end{eqnarray}

\begin{eqnarray}\label{eq4}
  <R(U,Y)Y,Y>=0,
\end{eqnarray}
and
\begin{eqnarray}\label{eq5}
  <R(U,Y)Y,U>&=&-\frac{1}{2}<[\phi U,Y]+[U,\phi Y],[Y,U]>_0\nonumber \\
  && \ \ \ -\frac{3}{4}<[Y,U],[Y,U]_{\frak{m}}>-<[U,\phi U],\phi^{-1}([Y,\phi Y])>_0 \\
  && \ \ \ +\frac{1}{4}<[U,\phi Y]+[Y,\phi U],\phi^{-1}([Y,\phi U]+[U, \phi Y])>_0.\nonumber
\end{eqnarray}
Substituting the equations (\ref{eq1}), (\ref{eq2}), (\ref{eq3}),
(\ref{eq4}) and (\ref{eq5}) in the equation (\ref{flag}) completes the
proof.

\end{proof}

Now we continue our study with a special type of Riemannian
homogeneous spaces which has been named naturally reductive. We
remind that  a homogeneous space $M=G/H$ with a $G-$invariant
indefinite Riemannian metric $g$ is said to be naturally reductive
if it admits an $ad(H)$-invariant decomposition
$\frak{g}=\frak{h}+\frak{m}$ satisfying the condition
\begin{eqnarray}
 B(X,[Z,Y]_{\frak{m}})+B([Z,X]_{\frak{m}},Y)=0 \hspace{1.5cm}\mbox{for} \ \ \ X, Y, Z \in
 \frak{m},
\end{eqnarray}
where $B$ is the bilinear form on $\frak{m}$ induced by $\frak{g}$
and $[,]_{\frak{m}}$ is the projection to $\frak{m}$ with respect
to the decomposition $\frak{g}=\frak{h}+\frak{m}$ (For more details see \cite{KoNo}.).\\
In this case the above formula for the flag curvature reduces to a
simpler equation.

\begin{theorem}
In the previous theorem let $G/H$ be a naturally reductive
homogeneous space. Then the flag curvature of the flag $(P,Y)$ in
$T_H(G/H)$ is given by \ref{main-flag-cur-formula} where,
\begin{eqnarray}
    R(U,Y)Y&=&\frac{1}{4}[Y,[U,Y]_{\frak{m}}]_{\frak{m}}+[Y,[U,Y]_{\frak{h}}]
\end{eqnarray}
\end{theorem}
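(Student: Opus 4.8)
The statement concerns only the vector $R(U,Y)Y$: once it is known, the flag curvature follows by substituting into (\ref{main-flag-cur-formula}) exactly as in the previous theorem, since every inner product appearing there is taken with respect to $<.,.>$. So the whole task reduces to evaluating the curvature tensor of $g$ on a naturally reductive $G/H$, and the plan is to derive this classical expression as a specialization of the general formula.

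First I would extract the consequences of natural reductivity. The defining condition says that for each $Z\in\frak{m}$ the map $X\mapsto[Z,X]_{\frak{m}}$ is skew-symmetric with respect to $<.,.>$. Combined with the bi-invariance of $g_0$ (which already makes $G/H$ naturally reductive for $g_0$) and the $Ad_H$-invariance of both $g$ and $g_0$ — forcing $\phi$ to preserve $\frak{m}$, to restrict to the identity on $\frak{h}$, and to commute with $ad(W)$ for $W\in\frak{h}$ — this is precisely what makes the symmetric correction term in the Koszul formula vanish, so that the Levi-Civita connection of $g$ satisfies $\nabla_X Y=\frac{1}{2}[X,Y]_{\frak{m}}$ for $X,Y\in\frak{m}$. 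I would record this as a short lemma, since it is the step at which all the $\phi$-dependence of the general expressions (\ref{eq3})--(\ref{eq5}) collapses into pure brackets. Then I would compute $R(U,Y)Y=\nabla_U\nabla_Y Y-\nabla_Y\nabla_U Y-\nabla_{[U,Y]}Y$ directly: the first two terms produce iterated $\frak{m}$-brackets through $\nabla_X Y=\frac12[X,Y]_{\frak{m}}$, while in the third term I split $[U,Y]=[U,Y]_{\frak{m}}+[U,Y]_{\frak{h}}$, the $\frak{m}$-part again giving an $\frak{m}$-bracket and the $\frak{h}$-part contributing through the linear isotropy representation of $\frak{h}$ on $\frak{m}$, which produces the term $[Y,[U,Y]_{\frak{h}}]$. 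Using $[Y,Y]=0$ kills one contribution outright, and the antisymmetry of the bracket together with the skew-symmetry from natural reductivity combines the surviving $\frak{m}$-terms into $\frac14[Y,[U,Y]_{\frak{m}}]_{\frak{m}}$, yielding $R(U,Y)Y=\frac14[Y,[U,Y]_{\frak{m}}]_{\frak{m}}+[Y,[U,Y]_{\frak{h}}]$.

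I expect the main obstacles to be two bookkeeping points rather than any conceptual difficulty. First, one must translate carefully between the Lie bracket on $\frak{g}$ and the bracket of the invariant vector fields on $G/H$, and keep the two projections $[\cdot,\cdot]_{\frak{m}}$ and $[\cdot,\cdot]_{\frak{h}}$ straight, since this is exactly where sign errors enter. Second, the overall sign must be reconciled with the convention fixed in the Notice after (\ref{puttmans formula}) — the minus relative to P\"uttmann — and then checked for consistency against the general formulas (\ref{eq3}) and (\ref{eq5}): specializing those to the naturally reductive case, where $\phi$ commutes with the relevant $ad$-operators, must reproduce the same vector, which provides a useful independent verification.
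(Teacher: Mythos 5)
Your proposal is correct and arrives at exactly the stated formula, but it takes a genuinely different route from the paper: the paper's entire proof is a one-line citation of Proposition 3.4 of \cite{KoNo} (Vol.\ 2, p.\ 202), which \emph{is} the curvature formula for naturally reductive spaces, whereas you re-derive that proposition from scratch. Your derivation is the standard one and the arithmetic checks out: natural reductivity is equivalent to the vanishing of the symmetric part of the connection function, so the Levi-Civita connection of $g$ corresponds to the Nomizu map $\Lambda(X,Y)=\frac{1}{2}[X,Y]_{\frak{m}}$, and the curvature formula for invariant connections on a reductive homogeneous space,
\begin{equation*}
R(X,Y)Z=\Lambda(X,\Lambda(Y,Z))-\Lambda(Y,\Lambda(X,Z))-\Lambda([X,Y]_{\frak{m}},Z)-[[X,Y]_{\frak{h}},Z],
\end{equation*}
evaluated at $X=U$, $Z=Y$ gives $-\frac{1}{4}[Y,[U,Y]_{\frak{m}}]_{\frak{m}}+\frac{1}{2}[Y,[U,Y]_{\frak{m}}]_{\frak{m}}+[Y,[U,Y]_{\frak{h}}]$, i.e.\ exactly the expression in the theorem. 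Two cautions about your write-up. First, the step ``compute $R(U,Y)Y$ directly by iterating $\nabla_XY=\frac{1}{2}[X,Y]_{\frak{m}}$'' is not literally available as a vector-field computation: a generic element of $\frak{m}$ extends to a $G$-invariant vector field only if it is $Ad(H)$-fixed, and if you use Killing fields $X^*$ instead, the identity holds only at the origin and with the opposite sign ($(\nabla_{X^*}Y^*)_o=-\frac{1}{2}[X,Y]_{\frak{m}}$, since $[X^*,Y^*]=-[X,Y]^*$), so it cannot be differentiated again naively; moreover, covariant differentiation is tensorial in the direction slot, so a naive term $\nabla_{[U,Y]_{\frak{h}}}Y$ would vanish at the origin rather than produce the isotropy term. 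The contribution $-[[X,Y]_{\frak{h}},Z]$ that you correctly include is precisely what the invariant-connection (Nomizu) calculus supplies, so that formalism --- or the citation the paper uses --- is the honest justification of your computation. Second, the symmetric correction term vanishes because of the natural-reductivity hypothesis alone (skew-symmetry of $ad(Z)|_{\frak{m}}$ with respect to $<.,.>$); the listed properties of $\phi$ are not what kills it, and indeed $\phi$ plays no role in this theorem. What your route buys is a self-contained proof and a useful consistency check against (\ref{eq3}) and (\ref{eq5}); what the paper's route buys is brevity.
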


\begin{proof}
By using Proposition 3.4 in \cite{KoNo} (page 202) the claim clearly follows.
\end{proof}

If the invariant Kropina metric is defined by a bi-invariant
Riemannian metric on a Lie group then there is a simpler formula
for the flag curvature, we give this formula in the following
theorem.
\begin{theorem}
Let $G$ be a Lie group, $g$ be a bi-invariant Riemannian metric on
$G$, and $\tilde{X}$ be a left invariant vector field on $G$.
Suppose that $F=\frac{\alpha^2}{\beta}$ is the Kropina metric
defined by $g$ and $\tilde{X}$ on $G$ such that the Chern
connection of $F$ coincides on the Levi-Civita connection of $g$.
Then for the flag curvature of the flag $P=span\{Y,U\}$, where
$\{Y,U\}$ is an orthonormal basis for $P$ with respect to $g$, we
have:
\begin{eqnarray}\label{flagbi-invariant}
  K(P,Y)=\frac{-3<U,X><[[U,Y],Y],X>-2<Y,X><[[U,Y],Y],U>}{8(\frac{<U,X>}{<Y,X>})^2+8},
\end{eqnarray}
\end{theorem}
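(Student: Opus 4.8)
The plan is to specialize the general flag-curvature formula of the first theorem (equation \ref{main-flag-cur-formula}) to the bi-invariant case. Since the hypotheses of this final theorem are a special instance of those in Theorem \ref{flagcurvature} — a Lie group $G$ is the homogeneous space $G/\{e\}$, so $\frak{h}=0$ and $\frak{m}=\frak{g}$ — the numerator and denominator of \ref{main-flag-cur-formula} already apply verbatim. The entire task reduces to simplifying the two curvature expressions $\langle R(U,Y)Y,X\rangle$ and $\langle R(U,Y)Y,U\rangle$ under the extra assumption of bi-invariance. So the real content is purely algebraic: I would compute what $\phi$, $B_+$, $B_-$, and the curvature tensor become when $g=g_0$ is bi-invariant.

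**First I would** record the consequences of bi-invariance. When $g$ is bi-invariant, the inner product $\langle\cdot,\cdot\rangle$ coincides with $\langle\cdot,\cdot\rangle_0$ (up to normalization), so the positive-definite endomorphism $\phi$ defined by $\langle X,Y\rangle=\langle\phi X,Y\rangle_0$ is simply the identity, $\phi=\mathrm{id}$. This immediately collapses the auxiliary brackets: $B_+(X,Y)=B_-(X,Y)=\frac{1}{2}([X,Y]+[Y,X])=0$ for all $X,Y$, since the Lie bracket is antisymmetric. Consequently every term in \ref{eq3} and \ref{eq5} that involves $B_+$ or $\phi^{-1}$ either vanishes or simplifies. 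A further standard fact for bi-invariant metrics is the $\mathrm{ad}$-invariance identity $\langle [X,Y],Z\rangle=-\langle Y,[X,Z]\rangle$, equivalently $\langle [X,Y],Z\rangle=\langle X,[Y,Z]\rangle$, which I would use to rewrite the surviving bracket pairings.

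**Next I would** substitute $\phi=\mathrm{id}$ into \ref{eq3} and \ref{eq5}. After setting $B_+=0$, the $\phi^{-1}(\cdot)$ terms drop out entirely, and the first lines simplify because $[\phi U,Y]+[U,\phi Y]=2[U,Y]$ and $[\phi Y,X]+[Y,\phi X]=2[Y,X]$. For the bi-invariant Lie group there is no $\frak{h}$-component to discard, so $[\,\cdot,\cdot]_{\frak{m}}=[\,\cdot,\cdot]$. Collecting the coefficients, the messy four-term expressions should contract to a single multiple of $\langle [[U,Y],Y],X\rangle$ and $\langle [[U,Y],Y],U\rangle$ respectively. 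Concretely, I expect $\langle R(U,Y)Y,X\rangle$ to reduce to $-\tfrac{3}{8}\langle [[U,Y],Y],X\rangle$ and $\langle R(U,Y)Y,U\rangle$ to $-\tfrac{3}{8}\langle [[U,Y],Y],U\rangle$, since the standard bi-invariant curvature formula gives $R(U,Y)Y=-\tfrac14[[U,Y],Y]$ and the Puttmann coefficients here are built to recover it; the factor then feeds into the denominator's constant $8$ in \ref{flagbi-invariant}.

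**The main obstacle** will be the bookkeeping in the bracket contractions: combining the $\mathrm{ad}$-invariance moves with the Jacobi identity to verify that the several surviving terms in \ref{eq3} and \ref{eq5} genuinely collapse to the claimed single double-bracket, with exactly the right numerical coefficient. In particular, the term $-\tfrac34\langle [Y,U],[Y,X]\rangle$ must be rewritten via $\mathrm{ad}$-invariance as a double-bracket pairing $\langle [[Y,U],Y] \text{ or } [[U,Y],Y],X\rangle$, and its coefficient must be reconciled against whatever the first line contributes after $\phi=\mathrm{id}$. Once the two curvature quantities are in the form $c\,\langle[[U,Y],Y],\cdot\rangle$ for a common constant $c$, I would substitute them into the numerator of \ref{main-flag-cur-formula}, absorb $c$ into the denominator (turning its $2$ into $8$ as displayed), and read off \ref{flagbi-invariant}, completing the proof.
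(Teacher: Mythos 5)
Your overall strategy is sound and is essentially a self-contained version of the paper's own proof: the paper simply invokes the standard fact that for a bi-invariant metric (with its curvature convention) $R(U,Y)Y=-\frac{1}{4}[[U,Y],Y]$, substitutes this into (\ref{main-flag-cur-formula}), and the factor $\frac{1}{4}$ converts the denominator $2(\frac{<U,X>}{<Y,X>})^2+2$ into $8(\frac{<U,X>}{<Y,X>})^2+8$. You propose instead to re-derive that curvature identity by setting $\phi=\mathrm{id}$ in (\ref{eq3}) and (\ref{eq5}), which is a legitimate (if longer) route. However, two of your concrete claims are wrong. First, $B_-$ does \emph{not} vanish when $\phi=\mathrm{id}$: from $B_-(X,Y)=\frac{1}{2}([\phi X,Y]+[X,\phi Y])$ one gets $B_-(X,Y)=[X,Y]$, not $\frac{1}{2}([X,Y]+[Y,X])=0$; only $B_+$ is killed by antisymmetry of the bracket. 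This slip happens not to derail your computation, because you separately (and correctly) simplify the first lines of (\ref{eq3}) and (\ref{eq5}) via $[\phi U,Y]+[U,\phi Y]=2[U,Y]$, but it contradicts that very step: those first lines \emph{are} the $B_-$ terms of P\"uttmann's formula, and if $B_-$ truly vanished they would be zero and the curvature would come out wrong.

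Second, and more seriously, the coefficient you predict is incorrect, and your own sentence is self-contradictory about it. Carrying the substitution through: the first line of (\ref{eq3}) gives $-\frac{1}{4}(2+2)<[U,Y],[Y,X]>=-<[U,Y],[Y,X]>$, the term $-\frac{3}{4}<[Y,U],[Y,X]>$ gives $+\frac{3}{4}<[U,Y],[Y,X]>$, and both $\phi^{-1}$ terms vanish; the total is $-\frac{1}{4}<[U,Y],[Y,X]>=-\frac{1}{4}<[[U,Y],Y],X>$ by ad-invariance, and the analogous computation in (\ref{eq5}) also yields the factor $-\frac{1}{4}$. So the common constant is $c=-\frac{1}{4}$ --- exactly the standard value $R(U,Y)Y=-\frac{1}{4}[[U,Y],Y]$ that you quote in the same breath --- and not $-\frac{3}{8}$. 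The error matters: with $c=-\frac{3}{8}$ the denominator of (\ref{main-flag-cur-formula}) would become $\frac{16}{3}(\frac{<U,X>}{<Y,X>})^2+\frac{16}{3}$ rather than the $8(\frac{<U,X>}{<Y,X>})^2+8$ appearing in (\ref{flagbi-invariant}), so your argument as written would fail to reproduce the theorem. Replace $-\frac{3}{8}$ by $-\frac{1}{4}$ and fix the $B_-$ remark, and your proof closes correctly.
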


\begin{proof}
$g$ is bi-invariant. Therefore we have
$R(U,Y)Y=-\frac{1}{4}[[U,Y],Y]$. Now by using Theorem
\ref{main-flag-cur-formula} the proof is completed.
\end{proof}

\large{\textbf{Acknowledgment}}\\
This work was supported by the research grant from Shahrood
University of Technology.


\bibliographystyle{amsplain}

\end{document}